\declaretheoremstyle[headfont=\normalsize\normalfont\bfseries,notefont=\mdseries,
notebraces={(}{)},bodyfont=\normalfont,postheadspace=0.5em]{basicstyle}
\declaretheoremstyle[headfont=\normalsize\normalfont\bfseries,notefont=\mdseries,
notebraces={(}{)},bodyfont=\normalfont\itshape,postheadspace=0.5em]{italstyle}
\declaretheorem[name=Definition,style=basicstyle]{defn}
\declaretheorem[style=italstyle,name=Theorem]{theorem}
\declaretheorem[style=italstyle,name=Question]{question}
\declaretheorem[style=italstyle,name=Lemma]{lemma}
\declaretheorem[style=italstyle,name=Proposition,sibling=lemma]{prop}
\renewenvironment{proof}{\preproof}{\endpreproof}
\newcommand{\C}{\mathbb{C}}
\renewcommand{\d}{\mathrm{d}}
\newcommand{\pr}{\mathrm{pr}}
\newcommand{\R}{\mathbb{R}}
\newcommand{\set}[1]{\left\{#1\right\}}
\renewcommand{\subsection}{\@startsection{subsection}{2}%
  \z@{.5\linespacing\@plus.7\linespacing}{-.5em}%
  {\normalfont\itshape}}
\newcommand{\Z}{\mathbb{Z}}
\title{Contact non-squeezing and orderability via the shape invariant}
\author{Dylan Cant}
\date{\today}
\begin{document}
\maketitle

\begin{abstract}
  We prove a contact non-squeezing result for a class of embeddings between starshaped domains in the contactization of the symplectization of the unit cotangent bundle of certain manifolds. The class of embeddings includes embeddings which are not isotopic to the identity. This yields a new proof that there is no positive loop of contactomorphisms in the unit cotangent bundles under consideration. The proof uses the shape invariant introduced by Sikorav and Eliashberg.
\end{abstract}

\section{Introduction}
\label{sec:introduction}

Let $(Y,\xi)$ be a compact cooriented contact manifold, let $(SY,\lambda)$ denote its symplectization, and let $Q=\R/\Z\times SY$ be the contactization with contact form $\alpha=\d t-\lambda$. The philosophy propounded by \cite{ekp} is that \emph{orderability} properties of $Y$ are related to \emph{contact non-squeezing} properties of certain domains in $Q$.

The relevant class of domains are the \emph{starshaped} ones, namely those defined as sublevel sets $E\le 1$ where $E:Q\to (0,\infty)$ is homogeneous with repect to the Liouville flow, i.e., $E(t,e^{s}z)=e^{s}E(t,z)$. Write $e^{s}z$ for the image of $z\in SY$ under the time $s$ Liouville flow.

Recall from \cite{ep2000} that a path of contactomorphisms on $Y$ is \emph{positive} provided its lift to a path of equivariant symplectomorphisms of $SY$ is generated by a positive homogeneous Hamiltonian function.

We say that $(Y,\xi)$ is \emph{orderable} if it does not admit a positive contractible loop of contactomorphisms, while we say it is \emph{strongly orderable}\footnote{In \cite{casals_presas_strong} the authors introduce the term strongly orderable. However, the term ``strongly orderable'' is also defined in \cite{guogang_liu_positive_loops} and \cite{chantraine_colin_rizell} and means something else. It does not seem like there is a standard term for contact manifolds which admit a positive (potentially non-contractible) loop of contactomorphisms.} if it does not admit \emph{any} positive loop of contactomorphisms.

% In \cite{ep2000}, the authors introduce a relation $\le$ on the group $\Gamma=\mathrm{Cont}_{0}(Y,\xi)$ and on its universal cover $\mathscr{D}\to \Gamma$, and investigate when this relation defines a partial order. One says that $\varphi_{0}\le \varphi_{1}$ if there is a non-negative isotopy $\varphi_{t}$ joining $\varphi_{0}$ to $\varphi_{1}$. This is a transitive and reflexive relation on $\Gamma$ and $\mathscr{D}$. The result \cite[Criterion 1.2.C]{ep2000} shows that orderability is equivalent to $(\mathscr{D},\le)$ being a partial order. The same argument shows that strong orderability is equivalent to $(\Gamma,\le)$ being a partial order.

One of the results of \cite{ekp} can be paraphrased as follows: {\itshape (i) if $Y$ is not strongly orderable, then any starshaped domain can be contactomorphically embedded into any other sharshaped domain; (ii) if $Y$ is not orderable, then any sufficiently small starshaped domain can be transported by a compactly supported contact isotopy into any other starshaped domain.} Morally, positive loops of contactomorphisms can be used a squeezing tool.

Moreover, the arguments of \cite{ekp} imply the existence of contact embeddings satisfying a particular cohomological property. Observe that the projection of any starshaped domain $\Omega$ onto $\R/\Z\times Y$ induces a canonical isomorphism between $H^{1}_{\mathrm{dR}}(\Omega)$ and $H^{1}_{\mathrm{dR}}(Q)$. This allows us to define the following class of embeddings of starshaped domains:
\begin{defn}
  Let $\Omega_{1},\Omega_{2}$ be starshaped domains. A contact embedding $\Phi:\Omega_{1}\to \Omega_{2}$ is called a \emph{generalized squeezing} provided there is a function $k:H^{1}_{\mathrm{dR}}(Q)\to \R$ so that:
  \begin{equation}\label{eq:gen_squeeze}
    \Phi^{*}[\beta]=[\beta]+k([\beta])[\d t],
  \end{equation}
  and $k([\d t])=0$, where $t:Q\to \R/\Z$ is the coordinate projection.
\end{defn}
Part (i) of the \cite{ekp} construction described above produces generalized squeezings of starshaped domains:
\begin{lemma}\label{lemma:pos_loop_squeezing}
  If $Y$ is connected and not strongly orderable, then any starshaped domain admits a generalized squeezing into any other starshaped domain.
\end{lemma}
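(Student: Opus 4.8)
The plan is to run the Eliashberg--Kim--Polterovich squeezing construction \cite{ekp} (the one furnishing part (i) quoted above) on $Q=\R/\Z\times SY$, and then track carefully what the resulting embedding does to $H^{1}_{\mathrm{dR}}$. First I would unwind the hypothesis: since $Y$ is not strongly orderable there is a positive loop $\{\phi_{\tau}\}_{\tau\in\R/\Z}$ of contactomorphisms, whose lift to $SY$ is a loop $\{\Phi_{\tau}\}$ of homogeneous (degree one) symplectomorphisms generated by a positive homogeneous Hamiltonian $H_{\tau}>0$. The elementary computation I would record is that such a loop preserves $\lambda$ exactly, that $\lambda(X_{H_{\tau}})=H_{\tau}$, and hence that the naive suspension $(t,z)\mapsto(t,\Phi_{\tau}(z))$ fails to be contact precisely by the $1$-form $(H_{\tau}\circ\Phi_{\tau})\,\d t$; integrating this defect over one turn of the loop gives a \emph{positive} quantity, which is the ``room'' that positivity buys.

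Next I would assemble the embedding on the cover $\widetilde Q=\R\times SY$. There the diagonal scaling $\Lambda^{\sigma}(t,z)=(e^{\sigma}t,e^{\sigma}z)$ is a conformal contactomorphism, with $(\Lambda^{\sigma})^{*}\alpha=e^{\sigma}\alpha$, which shrinks the $SY$-factor enough to slot $\Omega_{1}$ inside $\Omega_{2}$ once starshapedness is used, but which compresses the $t$-period and so does not descend to $Q$. The role of the positive loop is to restore $\Z$-periodicity in $t$: because $H_{\tau}>0$, inserting the loop flow advances $t$ in the correct (positive) direction to compensate the compression, and winding the loop enough times accommodates any required amount of scaling, so \emph{any} $\Omega_{1}$ can be squeezed into \emph{any} $\Omega_{2}$. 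The output is a contact embedding $\Phi\colon\Omega_{1}\to\Omega_{2}$ of degree one in the circle direction.

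The heart of the matter is the cohomological identity \eqref{eq:gen_squeeze}. Using that $Y$ is connected, the projection onto $\R/\Z\times Y$ gives $H^{1}_{\mathrm{dR}}(\Omega_{i})\cong H^{1}_{\mathrm{dR}}(Q)\cong\R[\d t]\oplus H^{1}_{\mathrm{dR}}(Y)$, and I would compute $\Phi^{*}$ factor by factor. Scaling and $t$-translation act trivially on $H^{1}$, so the only nontrivial contribution comes from the loop, which drags a $Y$-cycle once around the $t$-circle as $\tau$ traverses $\R/\Z$; this produces a term proportional to $[\d t]$ whose coefficient is the pairing of the class with the flux of the positive loop. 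This is exactly a map $[\beta]\mapsto[\beta]+k([\beta])[\d t]$ with $k$ linear, and $k([\d t])=0$ because $\Phi$ has degree one in the circle direction and leaves the Reeb ($t$) direction untwisted, whence $\Phi^{*}[\d t]=[\d t]$.

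The hard part will be twofold: turning the ``scale, then reinsert the loop to recover periodicity'' step into an honest contact embedding rather than a heuristic (this is where the machinery of \cite{ekp} and the positivity of $H_{\tau}$ are genuinely used), and then rigorously identifying the induced map on $H^{1}_{\mathrm{dR}}$. In particular I expect the real work to be checking that $k$ is well defined independently of cycle representatives and of the particular squeezing, that it is linear, and that $k([\d t])=0$; this last point is what fixes the normalization in \eqref{eq:gen_squeeze} and is precisely what distinguishes a generalized squeezing from an arbitrary contact embedding.
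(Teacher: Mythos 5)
Your overall strategy (extract a positive loop from non-strong-orderability, run the EKP construction, track the effect on $H^{1}_{\mathrm{dR}}$) is the paper's strategy, and your cohomological analysis is essentially the paper's own: connectedness of $Y$ makes the coefficient $k([\beta])$ (the period of $\beta$ around the loop's orbits) well defined independently of base point, slice cycles are fixed because the loop is based at the identity, and $\Phi^{*}[\d t]=[\d t]$ forces $k([\d t])=0$. That half of your proposal is sound, modulo the observation (used by the paper) that the honest embedding is smoothly homotopic to the naive suspension $(t,z)\mapsto(t,\varphi_{kt}(z))$, so it suffices to compute with the latter.

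The genuine gap is in the construction of the embedding itself, which you leave as an acknowledged heuristic. Your mechanism --- conformally rescale on the universal cover $\R\times SY$ by $\Lambda^{\sigma}(t,z)=(e^{\sigma}t,e^{\sigma}z)$ and then ``reinsert the loop'' to restore $\Z$-periodicity --- is not how the construction of \cite{ekp} works, and it is precisely the step that would fail without a new idea: after scaling, the deck transformation becomes $t\mapsto t+e^{\sigma}$, and to descend to $Q$ you would need a contactomorphism defined near the (small) image conjugating $t\mapsto t+e^{\sigma}$ back to $t\mapsto t+1$ without undoing the squeezing; nothing in your sketch produces such a map, and winding the loop more times does not obviously supply it. The paper avoids this entirely: EKP's Proposition 2.1 (Lemma \ref{lemma:ekp_prop}) gives the explicit contact embedding $\Phi(t,z)=(t,e^{h_{t}(z)}\varphi_{t}(z))$ with $h_{t}(z)=-\log(1+H_{t}(\varphi_{t}(z)))$, which never touches the $t$-coordinate (so periodicity is never at risk) and maps \emph{all} of $Q$ into the starshaped domain $\set{H_{t}\le 1}$. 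Iteration of the loop then plays a different role from the one you assign it: $\varphi_{kt}$ is a positive loop with Hamiltonian $kH_{kt}$, so for $k$ large the domain $\set{kH_{kt}\le 1}$ lies inside any prescribed $\Omega_{2}$; it is this, not a compensation of $t$-compression, that lets one squeeze any $\Omega_{1}$ (indeed all of $Q$) into any $\Omega_{2}$. Replacing your second paragraph with this formula-plus-iteration argument closes the gap, and your cohomological paragraph then goes through unchanged.
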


The main purpose of this paper is a non-squeezing result. Let $Y$ be the unit cotangent bundle of the $n$-torus, $n>1$, and let $Q=\R/\Z\times SY$. Identify $SY$ with the complement of the zero section in $\mathbb{T}^{n}\times \R^{n}$, and let $q_{1},\dots,q_{n},p_{1},\dots,p_{n}$ be the resulting coordinates. Observe that $\Omega(r)=\set{0<p_{1}^{2}+\dots+p_{n}^{2}\le r^{2}}$ defines a particular starshaped domain in $Q$. We will prove:
\begin{theorem}\label{theorem:nonsqueezing}
  There is no generalized squeezing of $\Omega(R)$ into $\Omega(r)$ provided $r<R$.
\end{theorem}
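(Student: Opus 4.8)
The plan is to pass to the symplectization and reduce the non-squeezing statement to a computation of the Sikorav--Eliashberg shape invariant of the two domains, for which the flat Lagrangian tori are extremal. First I would lift everything to the symplectization $SQ=\R_{s}\times Q$ with Liouville form $\mu=e^{s}\alpha$, $\alpha=\d t-\sum p_{i}\,\d q_{i}$. A contact embedding $\Phi:\Omega(R)\to\Omega(r)$ has a canonical lift $\hat\Phi(s,x)=(s-\log g(x),\Phi(x))$, where $\Phi^{*}\alpha=g\,\alpha$, and one checks $\hat\Phi^{*}\mu=\mu$ exactly, i.e.\ $\hat\Phi$ is a Liouville (form-preserving) embedding $S\Omega(R)\to S\Omega(r)$. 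Passing to the symplectization is precisely what removes the conformal factor $g$, so that the relevant invariant becomes honest rather than merely projective.

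For a Lagrangian $(n+1)$-torus $L\subset SQ$ the restriction $\mu|_{L}$ is closed, hence carries a class $[\mu|_{L}]\in H^{1}(L;\R)$; together with the marking $H_{1}(\mathbb{T}^{n+1})\to H_{1}(L)$ this is the datum the shape records. The basic examples are the flat tori $\hat T_{c}=\set{s=s_{0},\,p=c}$, for which, in the basis dual to the $t$- and $q_{i}$-loops, $[\mu|_{\hat T_{c}}]=e^{s_{0}}(1,-c)$. Since $\hat T_{c}\subset S\Omega(\rho)$ exactly when $\abs{c}\le\rho$, these fill out, in the standard marking, the cone $\set{(a,b)\in\R_{>0}\times\R^{n}:\abs{b}\le\rho a}$ inside the shape of $S\Omega(\rho)$. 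The hard input I would invoke is the rigidity half of the shape invariant: in every fixed marking the shape of $S\Omega(\rho)$ is \emph{no larger} than the cone realized by flat tori, with in particular the pairing with the class of the Reeb ($t$-) loop positive. This is the pseudoholomorphic-curve content, and it is where essentially all the difficulty sits; the rest is bookkeeping.

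Now I would feed the generalized squeezing into this. Because $\hat\Phi^{*}=\Phi^{*}$ preserves the integral lattice $H^{1}(Q;\Z)$ while $\Phi^{*}[\d q_{i}]=[\d q_{i}]+k_{i}[\d t]$ and $\Phi^{*}[\d t]=[\d t]$, the numbers $k_{i}=k([\d q_{i}])$ are forced to be integers, so the induced shear $C$ on $H_{1}$, sending the $t$-loop to $t+\sum k_{i}q_{i}$ and fixing the $q_{i}$-loops, is a genuine lattice automorphism. The image $L_{c}=\hat\Phi(\hat T_{c})$ is then a Lagrangian torus in $S\Omega(r)$ lying in the marking $C$, and since $\hat\Phi^{*}\mu=\mu$ the class $[\mu|_{L_{c}}]$ pairs with the pushed-forward loops exactly as $[\mu|_{\hat T_{c}}]$ did: in the $C$-marking it has coordinates $(a_{0},b)=(a,-ac)$ with $\abs{c}\le R$ and $a=e^{s_{0}}>0$ arbitrary. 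On the other hand, viewing the flat tori $\hat T_{c'}\subset S\Omega(r)$, $\abs{c'}\le r$, in the $C$-marking gives the admissible region $\set{(a_{0},b):a:=a_{0}-\langle k,b\rangle>0,\ \abs{b}\le r\,a}$.

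Matching these two and specializing to the worst direction $c=-Rk/\abs{k}$ (or, if $R\abs{k}\ge 1$, any $c$ with $\langle k,c\rangle\le-1$) forces either the positivity to fail, $a\le 0$, or the radius bound to fail, $R\le r(1-R\abs{k})<r$; when $k=0$ it forces outright $R\le r$. In every case this contradicts $r<R$ together with the extremality of the flat tori, so no generalized squeezing of $\Omega(R)$ into $\Omega(r)$ can exist. The one step I expect to require genuine care, beyond invoking the rigidity theorem, is verifying that the extremal bound holds in the sheared marking $C$ and not merely the standard one; I would handle this either by the direct flat-tori computation above combined with the marking-independence of the shape invariant, or by transporting to a standard marking through an integral-affine Liouville automorphism of $SQ$ and tracking the resulting translation of the $p$-ball.
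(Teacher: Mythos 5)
Your proposal is correct and takes essentially the same route as the paper: lift the embedding to an exact symplectomorphism of the symplectization (the paper's cone $U(\rho)\subset T^{*}\mathbb{T}^{n+1}$), compute shapes via Sikorav's theorem that exact Lagrangian tori detect exactly the flat ones, absorb the cohomological shift by an integral shear (your marking change $C$ is precisely the paper's canonical transformation $\delta$, and your observation that the $k_{i}$ are integers is the point that makes $\delta$ well defined on the torus), and conclude with the same inequality contradiction. The only differences are presentational, so there is nothing to fix.
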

The tool used to prove Theorem \ref{theorem:nonsqueezing} is the \emph{shape invariant} for subsets of exact symplectic manifolds, as in \cite{sikorav_shape_duke}, \cite{e_new_invariants}, \cite{sikorav_shape_1}, \cite{ep2000}, \cite{muller_spaeth}.

On the other hand, if we consider all contact embeddings, there is total flexibility:
\begin{theorem}\label{theorem:flex}
  Let $Y$ be the unit cotangent bundle of the $n$-torus, let $\Omega\subset \R/\Z\times SY$ be any starshaped domain, and let $M^{2n+1}$ be any other contact manifold (e.g., take $M$ to be a Darboux chart). Then there is a contact embedding $\varphi:\Omega\to M$.
\end{theorem}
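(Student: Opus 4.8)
The plan is to prove flexibility by the h-principle, after reducing everything to a single model target. Since every contact manifold contains a Darboux chart---an open set contactomorphic to an open ball $W\subset(\R^{2n+1},\xi_{\mathrm{std}})$, $\xi_{\mathrm{std}}=\ker(\d z-\sum_i y_i\,\d x_i)$---it is enough to construct a contact embedding of $\Omega$ into $W$. Because the open-manifold h-principle requires an open source, I would first enlarge $\Omega=\{E\le 1\}$ to the open domain $\Omega'=\{E<1+\varepsilon\}\subset Q$; note $\Omega'$ has no boundary (its only end is the puncture $p\to 0$, where the zero section has been removed) and $\Omega\subset\Omega'$, so a contact embedding of $\Omega'$ restricts to one of $\Omega$.

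Next I would produce the formal solution. The domain $\Omega'$ is an open subset of $Q=\R/\Z\times\mathbb{T}^n\times(\R^n\setminus 0)$, a product of parallelizable manifolds, hence parallelizable. It admits a smooth embedding into a ball: indeed $\Omega'\subset\mathbb{T}^{n+1}\times\R^n$, and $\mathbb{T}^{n+1}$ embeds in $\R^{n+2}\subset\R^{2n+1}$ as a closed orientable hypersurface with trivial normal bundle, so a tubular neighborhood realizes $\mathbb{T}^{n+1}\times\R^n$ as an open subset of $\R^{2n+1}$, which one then carries diffeomorphically into $W$. Over any such smooth embedding $f$ a formal isocontact lift exists: the contact distribution $\xi_{\Omega'}=\ker\alpha$ with its conformal symplectic form $\d\alpha|_{\xi}$ is a trivial symplectic vector bundle (the base is parallelizable and $c_1(\xi)=0$, as $T^*\mathbb{T}^n$ is symplectically trivial), and likewise for $\xi_{\mathrm{std}}$; hence $\d f|_{\xi_{\Omega'}}$ is homotopic, through fibrewise injective bundle maps, to a conformal symplectic isomorphism onto $\xi_{\mathrm{std}}$.

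I would then apply Gromov's h-principle for isocontact embeddings of open manifolds in the equidimensional case (Gromov; Eliashberg--Mishachev): the formal isocontact embedding above is homotopic to a genuine contact embedding $\Omega'\hookrightarrow(W,\xi_{\mathrm{std}})$. Restricting to $\Omega$ and composing with the Darboux chart $W\hookrightarrow M$ produces the required $\varphi:\Omega\to M$.

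The main obstacle is the flexibility input itself---the equidimensional isocontact embedding h-principle---which is precisely what the rigid Theorem \ref{theorem:nonsqueezing} denies for \emph{generalized} squeezings; here there is no cohomological constraint to preserve, so the only thing left to verify is the purely topological formal solution, supplied above by the parallelizability of $ST^*\mathbb{T}^n$ and the explicit thickened-torus embedding. For geometric intuition, when $n=1$ a component $\{0<p\le R\}$ of $\Omega$ is literally the punctured standard neighborhood of a positively transverse unknot in $(\R^3,\xi_{\mathrm{std}})$---via $p=r^2$ (radial), $q=$ meridian, $t=$ core---so it embeds by hand. For $n\ge 2$ an explicit model must first remove the sign constraint on the momenta using the integer shear contactomorphism $(t,q,p)\mapsto(t+\sum_i c_i q_i,\,q,\,p+c)$, $c\in\Z^n$ (which preserves $\alpha$ and is well defined on $\R/\Z\times\mathbb{T}^n$), and then wind $\mathbb{T}^n_q$ around the normal planes; completing this by hand---in particular opening up the Reeb circle $\R/\Z_t$---is delicate, and the h-principle is what lets me bypass it.
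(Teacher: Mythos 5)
Your reduction to a Darboux chart, the smooth embedding of $\mathbb{T}^{n+1}\times\R^{n}$ into $\R^{2n+1}$, and the formal-solution discussion are all reasonable, but the step carrying all the weight --- ``Gromov's h-principle for isocontact embeddings of open manifolds in the equidimensional case'' --- invokes a theorem that does not exist and is in fact false as stated. What the h-principle machinery (Gromov; Eliashberg--Mishachev) provides is: isocontact \emph{embeddings} in positive codimension, and isocontact \emph{immersions} in codimension zero for open manifolds. Codimension-zero \emph{embeddings} are exactly where contact rigidity lives, and no soft argument can produce them. A concrete counterexample to the statement you use: an overtwisted contact structure on $\R^{3}$ admits a formal isocontact embedding into $(\R^{3},\ker(\d z-y\,\d x))$ --- the source is contractible and both contact bundles are trivial, so the differential of any smooth embedding can be deformed through bundle monomorphisms to an isocontact one --- yet a genuine contact embedding would place an overtwisted disk inside the standard tight $\R^{3}$, contradicting Bennequin's theorem. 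Note that this counterexample also has no cohomological constraint in play ($\R^{3}$ is simply connected), so your closing argument --- that Theorem \ref{theorem:nonsqueezing} only obstructs \emph{generalized} squeezings, hence ``the only thing left to verify is the purely topological formal solution'' --- is unfounded: absence of a known obstruction is not a flexibility theorem, and codimension-zero contact embedding results are never formal. Assuming such an h-principle here is essentially assuming the conclusion of Theorem \ref{theorem:flex}.

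The paper instead argues by explicit construction, and your final paragraph correctly identifies its two ingredients without supplying them. After arranging $\Omega\subset\set{\min(p_{1},\dots,p_{n})>-k}$, one writes down the strict contact embedding $\varphi(t,q,p)=(t+k\sum q_{i},\,f(p_{1})e^{2\pi i q_{1}},\dots,f(p_{n})e^{2\pi i q_{n}})$ with $f(p)=\pi^{-1/2}(p+k)^{1/2}$, landing in $\R/\Z\times\C^{n}$ with contact form $\d t-\frac{1}{2}\sum(x_{i}\d y_{i}-y_{i}\d x_{i})$; this is precisely your ``winding of $\mathbb{T}^{n}_{q}$ around the normal planes,'' done by hand. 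The remaining step --- your ``opening up the Reeb circle,'' i.e.\ embedding $\R/\Z\times\C^{n}$ into a Darboux chart --- is not soft either: it is Corollary 1.25 of \cite{ekp}, whose proof is a genuinely geometric (positive-loop/squeezing) construction, not an application of an h-principle. If you wish to salvage your outline, the h-principle can at best hand you an equidimensional isocontact \emph{immersion} $\Omega'\to W$; removing its self-intersections is the entire problem, and in this setting it is solved only by the explicit constructions above.
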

This follows from \cite[Corollary 1.25]{ekp} whose proof implies that $\R/\Z\times \C^{n}$, with the contact form $\d t - \frac{1}{2}\sum x_{i}\d y_{i}-y_{i}\d x_{i}$, admits a contact embedding into a Darboux chart. 

Theorem \ref{theorem:nonsqueezing} can be generalized. Let $B^{n}$, $n>1$, be a compact and connected smooth manifold, let $Y$ be the unit cotangent bundle of $B$, and let $Q=\R/\Z\times SY$. Considering $SY\subset T^{*}B$ as the subset of the zero section, a fiberwise metric $\rho:T^{*}B\to [0,\infty)$ determines a star-shaped domain $\Omega(r)=\set{(t,z)\in Q:0<\rho(z)\le r^{2}}$. Then:
\begin{theorem}\label{theorem:nonsqueezing_gen}
  If $B$ admits a closed one-form $\beta$ such that $\rho(\beta)=1$ holds at all points then there is no generalized squeezing of $\Omega(R)$ into $\Omega(r)$ if $r<R$.
\end{theorem}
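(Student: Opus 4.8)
The plan is to reduce the contact problem to the exact symplectic setting where the shape invariant lives, and then to run essentially the argument behind Theorem~\ref{theorem:nonsqueezing}, with the one-form $\beta$ playing the role that the constant sections $\set{p=c}$ play on the flat torus. The key observation is that the normalization $\rho(\beta)=1$ is exactly the analogue of $\rho(\d q_i)=1$: it guarantees that the Lagrangian section $L_c=\mathrm{graph}(c\beta)\subset SY$ lies on the sphere $\set{\rho=c^2}$, hence inside the slice of $\Omega(R)$ precisely when $0<\abs{c}\le R$, while carrying the cohomology class $[\lambda|_{L_c}]=c[\beta]$. Since $\beta$ is nonvanishing and closed on the compact $B$, it is not exact, so $[\beta]\neq 0$ and these classes fill a genuine segment as $c$ ranges over $[-R,R]\setminus\set{0}$.

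First I would pass to the symplectization $W=\R_{>0}\times Q$ with Liouville form $\lambda_W=\tau(\d t-\lambda)$, where $\tau$ is the $\R_{>0}$-coordinate; a contact embedding $\Phi:\Omega(R)\to\Omega(r)$ lifts canonically to an embedding $\widehat\Phi:\R_{>0}\times\Omega(R)\to\R_{>0}\times\Omega(r)$ with $\widehat\Phi^{*}\lambda_W=\lambda_W$ exactly. For each $\tau_0>0$ and each $c$ with $0<\abs{c}\le R$ the product $\Sigma_{\tau_0,c}=\set{\tau_0}\times(\R/\Z)\times L_c$ is a Lagrangian submanifold of $W$ diffeomorphic to $S^1\times B$, and a direct computation gives $\lambda_W|_{\Sigma_{\tau_0,c}}=\tau_0\,\d t-\tau_0 c\,\beta$, so that $[\lambda_W|_{\Sigma_{\tau_0,c}}]=\tau_0[\d t]-\tau_0 c[\beta]$. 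Thus the shape of $\R_{>0}\times\Omega(R)$ contains every such class; as the construction is invariant under rescaling $\tau_0$, the relevant datum is the projective ratio, or \emph{slope}, which runs over the full interval $c\in[-R,R]\setminus\set{0}$.

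Next, because $\widehat\Phi$ preserves $\lambda_W$, it carries the shape of $\R_{>0}\times\Omega(R)$ into that of $\R_{>0}\times\Omega(r)$; the only subtlety is to re-express the image classes in the ambient framing spanned by $[\d t]$ and $[\beta]$. This is where the generalized-squeezing hypothesis enters: applying \eqref{eq:gen_squeeze} to $[\beta]$ and writing $k:=k([\beta])$, one has $\widehat\Phi^{*}[\d t]=[\d t]$ and $\widehat\Phi^{*}[\beta]=[\beta]+k[\d t]$, and a short linear computation then shows that $\widehat\Phi(\Sigma_{\tau_0,c})$ realizes the class $\tau_0(1+kc)[\d t]-\tau_0 c[\beta]$, i.e.\ the slope is transformed by the projective map $c\mapsto c/(1+kc)$. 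Granting the shape bound for the target discussed below, namely that every Lagrangian of type $S^1\times B$ inside $\R_{>0}\times\Omega(r)$ has slope of absolute value at most $r$, we would need $\abs{c/(1+kc)}\le r$ for all $c\in[-R,R]\setminus\set{0}$. But the image of $[-R,R]$ under $c\mapsto c/(1+kc)$ is never contained in $[-r,r]$: if $\abs{k}\ge 1/R$ the map has a pole at $c=-1/k\in[-R,R]$ and the image is unbounded, while if $\abs{k}<1/R$ one of the endpoints is sent to $\pm R/(1-\abs{k}R)$, of absolute value exceeding $R>r$. This contradiction, which crucially uses both signs of $c$, would prove the theorem.

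The hard part is the shape bound for the target used above: that no Lagrangian of the relevant topological type inside $\R_{>0}\times\Omega(r)$ can have $\beta$-slope larger than $r$. This is the genuine symplectic rigidity, and it is exactly the content supplied by the shape invariant of Sikorav and Eliashberg; I would establish it by the same mechanism as in the proof of Theorem~\ref{theorem:nonsqueezing}, the role of $\rho(\beta)=1$ being to normalize the size measured by the shape so that the cap occurs precisely at radius $r$ rather than at some metric-dependent constant. I expect the main technical work to lie in checking that this rigidity, which for the torus rests on the explicit shape of $\set{\rho\le r^2}\subset T^{*}T^{n}$, persists for a general base $B$ once one tracks only the single direction $[\beta]$ singled out by the hypothesis, rather than attempting to compute the entire base-dependent shape of $\set{\rho\le r^2}\subset T^{*}B$.
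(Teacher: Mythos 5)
Your setup and bookkeeping are sound and closely parallel the paper's proof: the Lagrangian sections built from $c\beta$ (the paper uses graphs of $a\,\d t+b\beta$ in $T^{*}(\R/\Z\times B)$), the computation that \eqref{eq:gen_squeeze} transforms the slope by $c\mapsto c/(1+kc)$, and the endpoint/pole analysis producing the contradiction (the paper takes $a=1$, $b=\min\set{1/k,R}$ for $k\ge 0$ and $b=\max\set{1/k,-R}$ for $k<0$) are all exactly the paper's argument. However, the step you explicitly defer --- the ``shape bound for the target,'' that every Lagrangian of the relevant type in $\R_{>0}\times\Omega(r)$ has slope of absolute value at most $r$ --- is the entire rigidity content of the theorem, and the route you sketch for it does not work. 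You propose to establish it ``by the same mechanism as in the proof of Theorem~\ref{theorem:nonsqueezing},'' but that mechanism is Proposition~\ref{prop:sikorav_shape}, Sikorav's computation of the shape of split subsets $\mathbb{T}^{n+1}\times W\subset T^{*}\mathbb{T}^{n+1}$, which is special to the torus and has no analogue over a general base $B$; the paper makes no attempt to compute any shape in the general case.

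What the paper does instead, and what is missing from your proposal, is a three-step replacement for the shape computation. First, it realizes the symplectization concretely as a cone inside $T^{*}(\R/\Z\times B)$ via $(t,z)\mapsto (t,1,-z)$, so that there is an actual zero section available (your abstract $W=\R_{>0}\times Q$ has none, so no cotangent-bundle rigidity can even be stated there). Second, given the image Lagrangian $\varphi\circ L_{a,b}$, it composes with the symplectomorphism $f_{a,b}$ that translates covectors by $-(a-kb)\,\d t-b\beta$; the cohomological bookkeeping (the same linear computation as your slope transformation) shows $f_{a,b}\circ\varphi\circ L_{a,b}$ is then a \emph{compact exact} Lagrangian in $T^{*}(\R/\Z\times B)$. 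Third, it invokes the theorem that a compact exact Lagrangian in a cotangent bundle must intersect the zero section: hence $\varphi(L_{a,b})$ meets the graph of $(a-kb)\,\d t+b\beta$, on which $\rho/s^{2}=b^{2}/(a-kb)^{2}$, and since $\varphi(L_{a,b})\subset U(r)$ this forces $b^{2}/(a-kb)^{2}\le r^{2}$, which is precisely your slope bound. So the rigidity input is not a shape computation at all, but Gromov's intersection theorem applied after translating by a suitable closed one-form --- the same fact that underlies Sikorav's proposition, used directly. Your instinct to track only the single direction $[\beta]$ is correct, but without this translation trick (or a substitute for it) your proposal leaves the decisive step unproven.
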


As a corollary to Theorem \ref{theorem:nonsqueezing_gen} and Lemma \ref{lemma:pos_loop_squeezing}, we conclude the known result that the unit cotangent bundle of closed manifolds $B$ with nowhere zero one-forms are strongly orderable (for $n>1$); see \cite[Theorem 1.3.B]{ep2000} (for orderability), and \cite{colin_ferrand_pushkar}, \cite{chernov_nemirovski_nonneg}, \cite{chernov_nemirovski_universal} (for strong orderability). Of course, the unit cotangent bundle of $\mathbb{T}^{1}$ is not strongly orderable (although it is orderable).

The natural question suggested by Theorem \ref{theorem:nonsqueezing} and \ref{theorem:nonsqueezing_gen} is:
\begin{question}
  Given a strongly orderable contact manifold $Y$, do star-shaped domains in $Q$ satisfy a non-squeezing result for generalized squeezings?
\end{question}

\subsection{Which contact manifolds are known to be strongly orderable?}
The unit cotangent bundle of any manifold $M^{d}$, $d>1$, with an open universal cover is strongly orderable. This is established in \cite{colin_ferrand_pushkar,chernov_nemirovski_nonneg,chernov_nemirovski_universal} where they show that there is no positive loop of Legendrians based at a fiber in $ST^{*}M$. The condition that $M$ has an open universal cover is weaker than $M$ admitting a nowhere zero closed one-form, e.g., a surface of genus $g>1$ does not admit a nowhere zero closed one-form. In \cite{chernov_nemirovski_nonneg}, the authors use \cite[Theorem 1.18]{ekp} and prove that all unit cotangent bundles are orderable. 

By measuring the growth rate of Rabinowitz Floer homology groups associated to a positive loop of contactomorphisms, \cite{albers_frauenfelder_nonlinear_maslov} shows that any closed manifold $M$ with finite fundamental group and a rational cohomology ring with at least two generators has a strongly orderable unit cotangent bundle. Note that such manifolds do not have an open universal cover.

The work of \cite{weigel} describes a local modification one can do to any Liouville fillable contact manifold $Y^{2n+1}$, $n\ge 3$, which produces a strongly orderable contact manifold.

On the other hand, a prequantization space is never strongly orderable as the Reeb flow induces a positive loop, although in many cases they are known to be orderable, see \cite{ep2000,ekp,milin_dissertation,sandon_orderability_lens,albers_merry_orderability_non_squeezing}.

In another direction, the work of \cite{casals_presas_strong} shows that if $(Y^{3},\xi)$ admits a positive loop of contactomorphisms $\varphi_{t}$, then (under certain circumstances) there is a transverse knot $\kappa$ so that the half Lutz twist along $\kappa$ produces an overtwisted contact manifold which is not strongly orderable. This is a step towards understanding the open question of the relationship between orderability and overtwistedness; see \cite[\S10]{BEM}, \cite{casals_presas_sandon_small}, and \cite{guogang_liu_positive_loops} for related results.

\subsection*{Acknowledgements}
I want to thank Jakob Hedicke for introducing me to the concept of orderability versus strong orderability, and for many discussions surrounding these topics, and Eric Kilgore for discussions on the contact shape invariant. This work was completed at the University of Montreal with funding from the CIRGET research group.

\section{Proofs}
\label{sec:proofs}
The outline for the rest of the paper is as follows: In \S\ref{sec:ekp_construction} positive loops are shown to induce generalized squeezings (Lemma \ref{lemma:pos_loop_squeezing}), in \S\ref{sec:shape_invariant} the shape invariant is reviewed, and in \S\ref{sec:proof_of_theorem}, \S\ref{sec:proof_of_flex}, \S\ref{sec:proof_of_gen}, Theorem \ref{theorem:nonsqueezing}, Theorem \ref{theorem:flex}, and Theorem \ref{theorem:nonsqueezing_gen} are proved, respectively.

\subsection{Positive loops as a squeezing tool}
\label{sec:ekp_construction}
Suppose that $Y$ is non-orderable. Let $\varphi_{t}$ be a positive loop of contactomorphisms, considered as a positive loop of equivariant symplectomorphisms of $(SY,\lambda)$. Without loss of generality, suppose that $\varphi_{0}=1$.

Positivity means that $\dot\varphi_{t}=X_{t}\circ \varphi_{t}$ where $X_{t}$ is the Hamiltonian vector field for a positive homogeneous function $H(t,z)=H_{t}(z)$ on $\R/\Z\times SY$. 

\begin{lemma}[Proposition 2.1 in \cite{ekp}]\label{lemma:ekp_prop}
  For $h_{t}(z)=-\log(1+H_{t}(\varphi_{t}(z)))<0$, the map $\Phi:(t,z)\mapsto (t,e^{h_{t}(z)}\varphi_{t}(z))$ is a contact embedding of $Q=\R/\Z\times SY$ into the interior of starshaped domain $\set{(t,z):H_{t}(z)\le 1}$.\hfill$\square$
\end{lemma}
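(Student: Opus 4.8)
The plan is to verify the two assertions separately: that $\Phi$ pulls the contact form $\alpha=\d t-\lambda$ back to a positive multiple of itself, and that the image lands in $\set{H_t<1}$, the interior of the stated domain. Throughout I write $Z$ for the Liouville vector field generating $s\mapsto e^{s}z$, and I lean on two preliminary identities that make everything run. First, since each $H_t$ is homogeneous of degree one, Euler's identity $\mathcal{L}_Z H_t=H_t$ together with $\iota_Z\d\lambda=\lambda$ and the sign convention $\iota_{X_t}\d\lambda=-\d H_t$ gives the standard relation $\lambda(X_t)=\iota_{X_t}\iota_Z\d\lambda=\d H_t(Z)=H_t$. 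Second, $\lambda(Z)=0$, as $Z$ lies in the kernel of the Liouville form. A bonus of the first identity is that $\mathcal{L}_{X_t}\lambda=\d\iota_{X_t}\lambda+\iota_{X_t}\d\lambda=\d H_t-\d H_t=0$, so $\frac{\d}{\d t}\varphi_t^{*}\lambda=\varphi_t^{*}\mathcal{L}_{X_t}\lambda=0$ and hence $\varphi_t^{*}\lambda=\lambda$ for all $t$, using $\varphi_0=1$.

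Next I would compute the pullback of the $SY$-component. Writing $G(t,z)=e^{h_t(z)}\varphi_t(z)$, so that $\Phi^{*}\alpha=\d t-G^{*}\lambda$, I factor $G$ through $F(t,z)=\varphi_t(z)$ followed by the fibrewise Liouville rescaling $w\mapsto e^{h_t}w$. Differentiating $F$ in the $\R/\Z$-direction produces $\partial_t\varphi_t(z)=X_t(\varphi_t(z))$, whence $F^{*}\lambda=\varphi_t^{*}\lambda+\lambda(X_t)|_{\varphi_t(z)}\,\d t=\lambda+(H_t\circ\varphi_t)\,\d t$ by the two identities above. For the rescaling, the homogeneity $(e^{s})^{*}\lambda=e^{s}\lambda$ together with $\lambda(Z)=0$ ensures that the point-dependent exponent contributes no additional one-form, giving $G^{*}\lambda=e^{h_t}F^{*}\lambda$. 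Writing $A=H_t\circ\varphi_t$ and $e^{h_t}=(1+A)^{-1}$, this collapses to
\begin{equation*}
  \Phi^{*}\alpha=\d t-\tfrac{1}{1+A}\big(\lambda+A\,\d t\big)=\tfrac{1}{1+A}(\d t-\lambda)=e^{h_t}\alpha,
\end{equation*}
the crucial cancellation of the $\d t$-coefficients being exactly what the choice $h_t=-\log(1+H_t\circ\varphi_t)$ is designed to produce. Since $e^{h_t}>0$, the contact structure and its coorientation are preserved.

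Finally, for the image I would again exploit degree-one homogeneity: $H_t(e^{h_t}\varphi_t(z))=e^{h_t}H_t(\varphi_t(z))=A/(1+A)<1$ since $A>0$, placing the image in $\set{H_t<1}$. For injectivity, note $\Phi$ preserves $t$, and on each slice $\set{t}\times SY$ it is $\varphi_t$ followed by $w\mapsto e^{h_t}w$; in fibre coordinates $(y,\tau)$ the latter reads $\tau\mapsto \tau/(1+\tau\eta_t(y))$ with $\eta_t>0$, which is strictly increasing in $\tau$ and hence injective, so $\Phi$ is an injective local diffeomorphism onto an open subset, i.e.\ an embedding. I expect the delicate point to be precisely the pullback computation of the middle paragraph: confirming that rescaling by the point-dependent factor $e^{h_t}$ produces no spurious one-form is where $\lambda(Z)=0$ is essential, and verifying the exact cancellation of the $\d t$-terms is what pins down the formula for $h_t$; once the identity $\lambda(X_t)=H_t$ is in hand, the remainder is bookkeeping.
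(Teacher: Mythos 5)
Your proof is correct. Note that the paper itself gives no argument for this lemma --- the $\square$ in the statement indicates it is quoted directly from Proposition 2.1 of \cite{ekp} --- so there is no internal proof to compare against; your computation (the identities $\lambda(X_t)=H_t$ and $\lambda(Z)=0$, the factorization of $\Phi$ through $\varphi_t$ followed by the Liouville rescaling, the cancellation of the $\d t$-terms giving $\Phi^{*}\alpha=e^{h_t}\alpha$, and the homogeneity argument placing the image in $\set{H_t<1}$) is precisely the standard verification underlying the cited result, and your fiberwise-injectivity argument correctly supplies the embedding property.
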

% \begin{proof}
%   We compute:
%   \begin{equation*}
%     \Phi^{*}\d t = \d t\text{ and }\Phi^{*}\lambda=e^{h_{t}(z)}(\dot\varphi_{t}\intprod \lambda)\d t+e^{h_{t}(z)}\lambda.
%   \end{equation*}
%   Standard manipulations of differential forms yield the identity $X_{t}\intprod \lambda=H_{t}$. This implies:
%   \begin{equation*}
%     \Phi^{*}(\d t-\lambda)=(1-e^{h_{t}(z)}H_{t}(\varphi_{t}(z)))\d t-e^{h_{t}(z)}\lambda.
%   \end{equation*}
%   Thus, in order for $\Phi$ to be a contactomorphism, it is necessary and sufficient to have
%   \begin{equation*}
%     (1-e^{h_{t}(z)}H_{t}(\varphi_{t}(z)))=e^{h_{t}(z)}\iff h_{t}(z)=-\log(1+H_{t}(\varphi_{t}(z))),
%   \end{equation*}
%   as desired. Notice that $H_{t}(\Phi(t,z))=H_{t}(\varphi_{t}(z))/(1+H_{t}(\varphi_{t}(z)))<1$, and indeed $\Phi$ is a contactomorphism between $Q$ and the set where $H_{t}(z)<1$.
% \end{proof}

We will use this to prove Lemma \ref{lemma:pos_loop_squeezing}.
\begin{proof}[of Lemma \ref{lemma:pos_loop_squeezing}]
  Let $\varphi_{t}$ be a positive loop of contactomorphisms. Fix two starshaped domains $\Omega_{1},\Omega_{2}$ in $Q$. For $k\in \Z_{>0}$, $\varphi_{kt}$ is a positive loop of contactomorphisms, whose Hamiltonian equals\footnote{Thanks to Jakob Hedicke for pointing out this trick to obtain positive loops with arbitrarily large contact Hamiltonians.} $kH_{kt}$. In particular, by taking $k$ sufficiently large, we may suppose that $kH_{kt}(z)<1$ implies $(t,z)\in \Omega_{2}$. Then the map $\Phi$ defined in Lemma \ref{lemma:ekp_prop} using the positive loop $\varphi_{kt}$ maps $\Omega_{1}$ into $\Omega_{2}$. 
  
  It remains to check the condition on the cohomology classes \eqref{eq:gen_squeeze}. It is sufficent to prove that $\Psi(t,z)=(t,\varphi_{kt}(z))$ satisfies \eqref{eq:gen_squeeze}, since $\Psi$ differs from $\Phi$ by a smooth homotopy. By assumption $\varphi_{0}=1$, and hence any loop $\gamma$ contained in the slice $\set{0}\times SY$ satisfies $\Psi\circ \gamma=\gamma$. For such loops we have $\int \gamma^{*}\beta=\int \gamma^{*}\Psi^{*}\beta$ for any one-form $\beta$.

  Let $\eta(t)=(t,z)$, for any $z\in SY$, and let $k(\beta)=\int \eta^{*}\Psi^{*}\beta-\eta^{*}\beta$. This does not depend on $z$ since $Y$ is connected (incidentally, the unit cotangent bundle of $\mathbb{T}^{1}$ is not connected, and this is where the proof fails for $n=1$). For all $\beta,\gamma,\eta$ as above:
  \begin{equation*}
    \int \eta^{*}\Psi^{*}\beta=\int \eta^{*}(\beta+k(\beta)\d t)\text{ and }\int \gamma^{*}\Psi^{*}\beta=\int \gamma^{*}(\beta+k(\beta)\d t).
  \end{equation*}
  A standard argument shows that $\Psi^{*}[\beta]=[\beta]+k(\beta)[\d t]$ in $H^{1}_{\mathrm{dR}}$ (roughly speaking, decompose any homotopy class of loops into a concatenation of pieces of the form $\eta$ and $\gamma$, and then use the fact that exact $1$-forms are characterized by having integral zero over every loop).

  On the other hand, it is immediate that $\Psi^{*}[\d t]=[\d t]$, and hence $\beta([\d t])=0$. This completes the proof of Lemma \ref{lemma:pos_loop_squeezing}.
\end{proof}

\subsection{Review of the shape invariant}
\label{sec:shape_invariant}

Consider $T^{*}\mathbb{T}^{n+1}\simeq \mathbb{T}^{n+1}\times \R^{n+1}$ with $\R/\Z$-valued spatial coordinates $x_{0},\dots,x_{n}$ and momentum coordinates $y_{0},\dots,y_{n}$. Let $\lambda=\sum y_{i}\d x_{i}$ be the canonical one-form. Say that a Lagrangian embedding $e:\mathbb{T}^{n+1}\to T^{*}\mathbb{T}^{n+1}$ is \emph{homologically standard} provided $e^{*}\pr^{*}=\mathrm{id}$ on $H^{1}_{\mathrm{dR}}(\mathbb{T}^{n+1})$, where $\pr$ is the projection.

Let $U\subset T^{*}\mathbb{T}^{n+1}$. Following \cite{sikorav_shape_duke}, \cite{e_new_invariants}, \cite{sikorav_shape_1}, \cite{ep2000}, define:
\begin{equation*}
  \mathrm{Shape}(U):=\set{[e^{*}\lambda]:e\text{ is homologically standard and $e(\mathbb{T}^{n+1})\subset U$}}\subset H^{1}_{\mathrm{dR}}(\mathbb{T}^{n+1}).
\end{equation*}
This uses the fact that $e^{*}\lambda$ is a closed one-form when $e$ is Lagrangian.

Given two subsets $U_{1},U_{2}$ of $T^{*}\mathbb{T}^{n+1}$, a map $\varphi:U_{1}\to U_{2}$ is \emph{homologically standard} provided $\varphi^{*}\mathrm{pr}^{*}=\mathrm{pr}^{*}$ on $H^{1}_{\mathrm{dR}}(\mathbb{T}^{n+1})$. We have the following result for \emph{exact} symplectic embeddings, i.e., those satisfying $\varphi^{*}\lambda=\lambda+\d f$.
\begin{lemma}\label{lemma:homologically_standard_exact}
  If $\varphi:U_{1}\to U_{2}$ is a homologically standard and exact symplectic embedding, then $\mathrm{Shape}(U_{1})\subset \mathrm{Shape}(U_{2})$.
\end{lemma}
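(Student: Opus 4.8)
The plan is to show that every element of $\mathrm{Shape}(U_1)$ is realized by a competitor living inside $U_2$, obtained by pushing the original Lagrangian forward along $\varphi$. Fix an arbitrary class in $\mathrm{Shape}(U_1)$, represented by a homologically standard Lagrangian embedding $e:\mathbb{T}^{n+1}\to T^{*}\mathbb{T}^{n+1}$ with $e(\mathbb{T}^{n+1})\subset U_1$; it contributes the class $[e^{*}\lambda]$. The natural candidate for a competitor in $U_2$ is the composition $\psi:=\varphi\circ e$, and I would check that $\psi$ is again a homologically standard Lagrangian embedding, now with image in $U_2$, and that it realizes the same shape value $[e^{*}\lambda]$.

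First I would record that $\psi$ is a Lagrangian embedding with image in $U_2$: since $\varphi$ is a symplectic embedding it carries Lagrangians to Lagrangians, and $\psi(\mathbb{T}^{n+1})=\varphi(e(\mathbb{T}^{n+1}))\subset \varphi(U_1)\subset U_2$. Next I would verify that $\psi$ is homologically standard by the chain rule on cohomology: $\psi^{*}\pr^{*}=e^{*}\varphi^{*}\pr^{*}=e^{*}\pr^{*}=\mathrm{id}$ on $H^{1}_{\mathrm{dR}}(\mathbb{T}^{n+1})$, where the middle equality uses the hypothesis that $\varphi$ is homologically standard and the last uses that $e$ is.

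The key step — and the only place the exactness hypothesis enters — is the computation of the shape value. Using $\varphi^{*}\lambda=\lambda+\d f$, I compute $\psi^{*}\lambda=e^{*}\varphi^{*}\lambda=e^{*}\lambda+\d(f\circ e)$, so that $[\psi^{*}\lambda]=[e^{*}\lambda]$ in $H^{1}_{\mathrm{dR}}(\mathbb{T}^{n+1})$. Thus $\psi$ is a homologically standard Lagrangian embedding into $U_2$ with $[\psi^{*}\lambda]=[e^{*}\lambda]$, which exhibits $[e^{*}\lambda]\in \mathrm{Shape}(U_2)$. Since the class was arbitrary, $\mathrm{Shape}(U_1)\subset \mathrm{Shape}(U_2)$.

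I expect no serious obstacle: the argument is essentially a functoriality computation. The one point requiring attention — and the reason \emph{both} hypotheses on $\varphi$ appear — is that exactness is what preserves the numerical value $[e^{*}\lambda]$, since it renders the discrepancy the exact form $\d(f\circ e)$, whereas homological standardness is separately needed to guarantee that the pushed-forward Lagrangian still qualifies as a competitor in the definition of $\mathrm{Shape}(U_2)$; neither condition implies the other.
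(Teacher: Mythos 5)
Your proof is correct and follows exactly the paper's argument: compose $e$ with $\varphi$, note the composition is a homologically standard Lagrangian embedding into $U_2$, and use exactness to see the shape value $[e^{*}\lambda]$ is unchanged. The paper's own proof is a terse three sentences; yours simply makes explicit the chain-rule and $[\psi^{*}\lambda]=[e^{*}\lambda]$ computations that the paper leaves implicit.
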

\begin{proof}
  Let $e:\mathbb{T}^{n+1}\to U_{1}$ be homologically standard. Then $\varphi\circ e$ is homologically standard in $U_{2}$, and hence $[e^{*}\lambda]$ is in the shape of $U_{2}$. Since $e$ was arbitrary, we conclude the desired result.
\end{proof}

The constant $1$-forms $\sum a_{i}\d x_{i}$ form a canonical basis $H^{1}_{\mathrm{dR}}(\mathbb{T}^{n+1})\simeq \R^{n+1}$. The fundamental theorem about shapes is the following result, originally due to \cite{sikorav_shape_duke}:

\begin{prop}\label{prop:sikorav_shape}
  Let $U=\mathbb{T}^{n+1}\times W$ where $W\subset \R^{n+1}$. Then, using the standard basis for $H^{1}_{\mathrm{dR}}(\mathbb{T}^{n+1})$, the shape of $U$ is equal to $W$.
\end{prop}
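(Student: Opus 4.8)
The plan is to establish the two inclusions $W\subseteq \mathrm{Shape}(U)$ and $\mathrm{Shape}(U)\subseteq W$ separately, with essentially all of the content residing in the second. For the inclusion $W\subseteq\mathrm{Shape}(U)$ I would argue by exhibiting explicit Lagrangians: given a point $a=(a_{0},\dots,a_{n})\in W$, consider the constant section $e_{a}(x)=(x,a)$. Its image is $\mathbb{T}^{n+1}\times\set{a}\subseteq \mathbb{T}^{n+1}\times W=U$; it is Lagrangian since it is the graph of the closed (indeed constant) one-form $\sum a_{i}\,\d x_{i}$; and it is homologically standard since $\pr\circ e_{a}=\id$. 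As $e_{a}^{*}\lambda=\sum a_{i}\,\d x_{i}$ represents $a$ in the standard basis, this shows $a\in\mathrm{Shape}(U)$, and since $a\in W$ was arbitrary the first inclusion follows.

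For the reverse inclusion, let $e$ be homologically standard with $e(\mathbb{T}^{n+1})\subseteq\mathbb{T}^{n+1}\times W$ and set $c=[e^{*}\lambda]\in\R^{n+1}$. The goal is to produce a point of the image whose fiber (momentum) coordinate is exactly $c$: since every fiber coordinate of the image lies in $W$, this immediately forces $c\in W$. The key device is the fiber translation $\tau_{c}(x,y)=(x,y-c)$, a symplectomorphism of $T^{*}\mathbb{T}^{n+1}$ with $\tau_{c}^{*}\lambda=\lambda-\sum c_{i}\,\d x_{i}$. Consequently $(\tau_{c}\circ e)^{*}\lambda=e^{*}\lambda-\sum c_{i}\,\d x_{i}$ has vanishing cohomology class, so $L:=\tau_{c}(e(\mathbb{T}^{n+1}))$ is a closed \emph{exact} Lagrangian torus, and a fiber point equal to $c$ in the original picture corresponds precisely to an intersection of $L$ with the zero section $\mathbb{T}^{n+1}\times\set{0}$.

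The main obstacle is exactly this intersection statement: I would invoke the rigidity theorem that a closed exact Lagrangian submanifold of a cotangent bundle $T^{*}M$ (here $M=\mathbb{T}^{n+1}$) cannot be disjoint from the zero section — provable either by Gromov's pseudoholomorphic curve techniques or, following Sikorav, by the generating-function and min-max method. This is where the difficulty concentrates, and it is what makes the result correct for \emph{arbitrary}, possibly non-convex, $W$: a naive averaging argument would only place $c$ in the convex hull of the momentum projection of the image, which can escape a non-convex $W$ (for instance into the hole of an annular $W$), whereas the exact-Lagrangian intersection theorem pins $c$ to an actual attained momentum value. It is instructive to note that when $n+1=1$ the intersection claim is elementary: for an embedded curve $\gamma$ winding once in $\mathbb{T}^{1}\times(a,b)$, Stokes' theorem identifies $\int_{\gamma} y\,\d x$ with $a$ plus the area of the region the curve cuts off, forcing it to lie strictly between $\min_{\gamma} y$ and $\max_{\gamma} y$; the content of the proposition is precisely that this confinement persists in all dimensions, where it becomes a genuinely symplectic rigidity phenomenon rather than a consequence of Green's theorem.
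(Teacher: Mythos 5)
Your proof is correct and takes essentially the same route as the paper: the paper's argument is precisely a reduction to the fact that every closed exact Lagrangian in a cotangent bundle intersects the zero section, and your fiber-translation $\tau_{c}$ is the standard (and intended) way of carrying out that reduction, matching the translation trick the paper itself uses explicitly in its proof of Theorem \ref{theorem:nonsqueezing_gen}. No gaps to report.
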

The proof is a straightforward application of the fact that every closed exact Lagrangian in the cotangent bundle intersects the zero section. See \cite[Chapter 9]{mcduffsalamon} and the references therein for more details. 

\subsection{Proof of Theorem \ref{theorem:nonsqueezing}}
\label{sec:proof_of_theorem}
Throughout this section, let $Y=\mathbb{T}^{n}\times S^{n-1}$, with coordinates $q_{1},\dots,q_{n}$ and $p_{1},\dots,p_{n}$, with $p_{1}^{2}+\dots+p_{n}^{2}=1$, and with the unit cotangent bundle contact form $\alpha=\sum p_{i}\d q_{i}$. Identify $SY$ with the open subset of $\mathbb{T}^{n}\times \R^{n}$ obtained by removing the zero section $\set{p=0}$. The Liouville flow by time $s$ is given by $(q,p)\mapsto (q,e^{s}p)$. Therefore:
\begin{equation*}
  Q=\R/\Z\times \mathbb{T}^{n}\times \R^{n}\text{ with form }\d t-\sum p_{i}\d q_{i}.
\end{equation*}
Recall the starshaped domain $\Omega(r):=\set{(t,q,p):0<\sum p_{i}^{2}\le r^{2}}$. Our goal is to prove that there is no generalized squeezing $\Omega(R)\to \Omega(r)$ if $R>r$.

In order to apply the shape invariant, we will embed $Q$ into $T^{*}\mathbb{T}^{n+1}$ as a contact type hypersurface. Define an embedding by:
\begin{equation*}
  \begin{aligned}
    x_{0}=t\text{ and }x_{i}&=q_{i}\\
    y_{0}=1\text{ and }y_{i}&=-p_{i}
  \end{aligned}\hspace{.5cm}\text{ for }i=1,\dots,n.
\end{equation*}
The restriction of the canonical form to this hypersurface equals $\d t-\sum p_{i}\d q_{i}$.

Let $U(r)=\set{(x,y):0<\sum y_{i}^{2}\le r^{2}y_{0}^{2}\text{ and }y_{0}>0}$, so that $U(r)$ is the cone over $\Omega(r)$. By standard properties of contact type hypersurfaces, if there is a contact embedding $\Phi:\Omega(R)\to \Omega(r)$, then there is an equivariant symplectomorphism $\varphi:U(R)\to U(r)$, i.e., $\varphi(x,e^{s}y)=e^{s}\varphi(x,y)$, lifting $\Phi$. It is well-known that equivariant symplectomorphisms are exact.

If $\Phi$ is a generalized squeezing, then $\varphi$ will not generally be homologically standard, and so we cannot yet apply Lemma \ref{lemma:homologically_standard_exact} to $U(R)$ and $U(r)$. The idea is to ``correct'' $\varphi$ to make it homologically standard by postcomposing with a canonical transformation of $T^{*}\mathbb{T}^{n+1}$.

Let $\Phi^{*}([\beta])=[\beta]+k([\beta])\d t$ and let $k_{i}=k([\d q_{i}])$. It is clear that:
\begin{equation*}
  \varphi^{*}[\d x_{0}]=[\d x_{0}]\text{ and }\varphi^{*}[\d x_{i}]=[\d x_{i}]+k_{i}[\d x_{0}].
\end{equation*}
Consider the canonical transformation:
\begin{equation*}
  \delta(x,y)=(x_{0},x_{1}-k_{1}x_{0},\dots,x_{n}-k_{1}x_{0},y_{0}+\sum k_{i}y_{i},y_{1},\dots,y_{n}).
\end{equation*}
Observe that $\delta^{*}(\sum y_{j}\d x_{j})=\sum y_{j}\d x_{j}$ and $\varphi^{*}\delta^{*}[\d x_{j}]=[\d x_{j}]$ for $j=0,\dots,n.$ Therefore $\delta\circ \varphi$ \emph{is a homologically standard exact embedding of $U(R)$ into $\delta(U(r))$}, and Lemma \ref{lemma:homologically_standard_exact} implies the shape of $U(R)$ is contained in the shape of $\delta(U(r))$.

It is clear that $U(R)=\mathbb{T}^{n+1}\times W(R)$ and $\delta(U(r))=\mathbb{T}^{n+1}\times W_{k}(r)$ with:
\begin{equation*}
  \begin{aligned}
    W(R)&=\set{0< \textstyle\sum_{i=1}^{n} y_{i}^{2}\le R^{2}y_{0}^{2}\text{ and }y_{0}>0}\\
    W_{k}(r)&=\set{0<\textstyle\sum y_{i}^{2}\le r^{2}(y_{0}-\sum k_{i}y_{i})^{2}\text{ and }y_{0}-\sum k_{i}y_{i}>0}.
  \end{aligned}
\end{equation*}
Proposition \ref{prop:sikorav_shape} implies the shapes of $U(R)$ and $\delta(U(r))$ are $W(R)$ and $W_{k}(r)$, respectively. It is easy to see that, no matter what $k_{1},\dots,k_{n}$ are, that $W(R)$ is not contained\footnote{See the proof of Theorem \ref{theorem:nonsqueezing_gen} below for a related argument.} in $W_{k}(r)$, and so applying Lemma \ref{lemma:homologically_standard_exact} shows $\varphi$, and hence $\Phi$, cannot exist. This concludes the proof of Theorem \ref{theorem:nonsqueezing}.

\subsection{Proof of Theorem \ref{theorem:flex}}
\label{sec:proof_of_flex}
Let $Y$ be the unit cotangent bundle of $\mathbb{T}^{n}$ and suppose that $\Omega\subset \R/\Z\times SY$ is a star-shaped domain. Consider $SY\subset T^{*}\mathbb{T}^{n}$ as the complement of the zero section. Without loss of generality, suppose $\Omega\subset \set{\min(p_{1},\dots,p_{n})>-k}$ for some integer $k>0$. As mentioned in \S\ref{sec:introduction}, it suffices to construct a contact embedding $\Omega\to \R/\Z\times \mathbb{C}^{n}$, since $\R/\Z\times \mathbb{C}^{n}$ has a contact embedding into a Darboux chart by \cite[Corollary 1.25]{ekp}.

Using the $t,q,p$ coordinate system, consider the map $\Omega\to \R/\Z\times \C^{n}$ given by:
\begin{equation*}
  \varphi:(t,q_{1},\dots,q_{n},p_{1},\dots,p_{n})\mapsto (t+k\sum q_{i},f(p_{1})e^{2\pi i q_{1}},\dots,f(p_{n})e^{2\pi i q_{n}}),
\end{equation*}
where $f(p)=\pi^{-1/2}(p+k)^{1/2}$ (this is why we require $\min(p_{1},\dots,p_{n})>-k$). Then $\varphi$ is a strict contact embedding. It is easily seen to be injective. We compute:
\begin{equation*}
  \varphi^{*}(\d t-\frac{1}{2}\sum_{i=1}^{n}x_{i}\d y_{i}-y_{i}\d x_{i})=\d t+k\sum_{i=1}^{n} \d q_{i}-\sum_{i=1}^{n}\pi f(p_{i})^{2}\d q_{i}=\d t-\sum_{i=1}^{n} p_{i}\d q_{i},
\end{equation*}
as desired. This completes the proof of Theorem \ref{theorem:flex}.

\subsection{Generalization to some other cotangent bundles}
\label{sec:proof_of_gen}

Let $\rho$ be a fiberwise metric so that the nowhere zero one-form $\beta$ satisfies $\rho(\beta)=1$. Let $\Omega(r)=\set{0<\rho(z)\le r^{2}}$ be considered as starshaped domain in $\R/\Z\times SY$, where $Y$ is the unit cotangent bundle of $B$. In search of a contradiction, suppose there is a generalized squeezing $\Phi:\Omega(R)\to \Omega(r)$ for $r<R$.

Consider the embedding: $$(t,z)\in \R/\Z \times T^{*}B\mapsto (t,1,-z)\in T^{*}(\R/\Z\times B)=T^{*}\R/\Z\times T^{*}B.$$
Writing $s$ for the fiber coordinate on $T^{*}\R/\Z$, so the canonical form is $s\d t$, we conclude that this defines a contact-type hypersurface inside of $T^{*}(\R/\Z\times B)$ whose induced contact form equals $\d t-\lambda$.

Extending via the Liouville flow, the contact embedding $\Phi:\Omega(R)\to \Omega(r)$ lifts to an equivariant exact symplectomorphism $\varphi:U(R)\to U(r)$, where:
\begin{equation*}
  U(r):=\R_{+}\Omega(r)=\set{(t,s,z):0<\rho(z)/s^{2}\le r^{2}\text{ and }s<0},
\end{equation*}
and similarly for $U(R)$. Since $\Phi$ is assumed to be a generalizing squeezing, it is easy to see that $\varphi^{*}[\d t]=[\d t]$ and $\varphi^{*}[\beta]=[\beta]+k[\d t]$.

Consider the Lagrangian graph $L_{a,b}=a\d t+b\beta$ for $a>0$. It is clear that $L_{a,b}$ lies in the locus where $s=a$ and $\rho=b^{2}$, and so $L_{a,b}\in U(R)$ if and only if $0<b^{2}/a^{2}\le R^{2}$.

Let $\Lambda$ denote the canonical form on $T^{*}(\R/\Z\times B)$. By definition, $L_{a,b}^{*}\Lambda=a\d t+b\beta$. Consider the symplectomorphism $f_{a,b}:T^{*}(\R/\Z\times B)\to T^{*}(\R/\Z\times B)$ which acts on one-forms by subtracting $(a-kb)\d t+b\beta$. It is clear that $f_{a,b}^{*}\Lambda=\Lambda-(a-kb)\d t-b\beta$, and hence:
\begin{equation*}
  [(f_{a,b}\circ \varphi\circ L_{a,b})^{*}\Lambda]=[L_{a,b}^{*}\Lambda]-a[\d t]-b[\beta]=0.
\end{equation*}
Consequently, $f_{a,b}\circ \varphi\circ L_{a,b}$ is a compact exact Lagrangian, and \emph{hence intersects the zero section}. In particular, $\varphi\circ L_{a,b}$ intersects the graph $f_{a,b}^{-1}(0_{B})$, i.e., $(a-kb)\d t+b\beta$. Thus $\rho/s^{2}$ attains the value $b^{2}/(a-kb)^{2}$ at some point on $\varphi(L_{a,b})$.

On the other hand, $\varphi(L_{a,b})$ is contained in $U(r)$. Since $L_{a,b}\in U(R)$ one has:
\begin{equation*}
  a>0\text{ and }b^{2}/a^{2}\le R^{2}\implies b^{2}/(a-kb)^{2}\le r^{2}
\end{equation*}
For $k\ge 0$, pick $a=1$ and $b=\min\set{1/k,R}$ to conclude a contradiction. For $k<0$, pick $b=\max\set{1/k,-R}$, and similarly conclude a contradiction. This completes the proof of Theorem \ref{theorem:nonsqueezing_gen}.

\bibliography{citations}
\bibliographystyle{alpha}
\end{document}